\newtheorem{theorem}{Theorem}
\newtheorem{lemma}[theorem]{Lemma}
\newtheorem{remark}[theorem]{Remark}
\theoremstyle{definition}
\theoremstyle{remark}
\theoremstyle{conclusion}
\theoremstyle{observation}
\newcommand{\vol}[1]{\left\lvert#1\right\rvert}
\newcommand{\bvol}[1]{\big\lvert#1\big\rvert}
\newcommand{\Bvol}[1]{\Big\lvert#1\Big\rvert}
\newcommand{\inner}[2]{\left\langle#1, #2\right\rangle}
\newcommand{\conv}[0]{\mathrm{conv}}
\newcommand{\ball}[2]{B_{#1}^{#2}}
\newcommand{\soa}[1]{\{#1\}}
\newcommand{\RR}{\mathbb{R}}
\newcommand{\calS}{S}
\newcommand{\widebar}[1]{\mkern 1.5mu\overline{\mkern-1.5mu#1\mkern-1.5mu}\mkern 1.5mu}
\begin{document}
\title{A Generalization Of Gr\"unbaum's Inequality}

\author{Brayden Letwin}
\address{Brayden Letwin,  University of Alberta, Edmonton, Alberta, T6G 2G1, Canada}
\email{bletwin@ualberta.ca}

\author{Vladyslav Yaskin}
\address{Vladyslav Yaskin, Department of Mathematical \& Statistical Sciences, University of Alberta, Edmonton, Alberta, T6G 2G1, Canada}
\email{yaskin@ualberta.ca}

\thanks{The first author was supported by an NSERC USRA award. The second author was supported by an NSERC Discovery Grant.}

\subjclass[2010]{Primary 52A20}
\keywords{Convex body, centroid, sections}

\begin{abstract}
    \noindent Grünbaum's inequality gives sharp  bounds between the volume of  a convex body and its part cut off by a hyperplane through the centroid of the body. We provide a generalization of this inequality for hyperplanes that do not necessarily contain the centroid. As an application, we obtain a sharp inequality that compares sections of a convex body to the maximal section parallel to it.
\end{abstract}
\maketitle

\section{Introduction}

    A \textit{convex body} $K$ is a compact convex subset of $\RR^n$ with non-empty interior. The \textit{centroid} (also called the \textit{center of mass}, or \textit{barycenter}) of $K$ is the point
    \begin{align*}
        g(K) = \frac{1}{\vol{K}}\int_K x \, dx.
    \end{align*}
    Here and throughout the paper, $|\cdot|$ denotes either the $n$-dimensional Lebesgue measure (volume) of a convex body or the  $(n-1)$-dimensional Lebesgue measure of its sections. It should be clear from the context what is meant in each particular case. An inequality of Grünbaum \cite{G} states if $K \subset \RR^n$ is a convex body with centroid at the origin then
    \begin{align}
        \left(\frac{n}{n+1}\right)^n \leq \frac{\vol{K \cap \xi^+}}{\vol{K}} \leq 1 - \left(\frac{n}{n+1}\right)^n, \quad \text{for all}\ \xi \in S^{n-1}. \label{eq1}
    \end{align}
    Here $\xi^+ = \{x \in \RR^n : \inner{x}{\xi} \geq 0 \}$ and   $\inner{\cdot}{\cdot}$ stands for the Euclidean inner product. We also write $S^{n-1}$ for  the unit sphere in $\RR^n$. The bounds in ($\ref{eq1}$) are sharp and equality occurs in the lower bound when, for example, $K$ is the cone
    \begin{align}
        K = \conv\left(\frac{-1}{n+1}\xi + \ball{2}{n-1}, \frac{n}{n+1}\xi\right) \label{eq2},
    \end{align}
    where we denote by $\ball{2}{n-1}$ the closed unit $(n-1)$-dimensional Euclidean ball in $\xi^\perp = \{x \in \RR^n : \inner{x}{\xi} = 0\}$.
     The upper bound in (\ref{eq1})  is easily obtained from the lower bound, and equality occurs in the upper bound when, for example, $K$ is the reflection about the origin of the cone in (\ref{eq2}).     
     For recent advancements in Grünbaum-type inequalities for sections and projections of convex bodies see \cite{FMY}, \cite{MNRY}, \cite{MSZ}, \cite{SZ}.

    In light of ($\ref{eq1}$), the goal of this paper is to establish a similar result with hyperplanes that do not necessarily contain the centroid. 
  For  $\alpha \in (-1, n)$ and $\xi \in S^{n-1}$ consider the half-space   
    \begin{align*}
    H_\alpha^+ = \{x \in \RR^n : \inner{x}{\xi} \geq \alpha h_K(-\xi)\},
    \end{align*}
    where $h_K$ is the support function for $K$ (see Section $2$ for the precise definition). We ask the following question: Are there positive constants $C_1(\alpha,n)$ and $C_2(\alpha,n)$, depending only on $\alpha$ and $n$, such that
    \begin{align}
        C_1(\alpha,n) \leq \frac{\vol{K \cap H_\alpha^+}}{\vol{K}} \leq C_2(\alpha,n),\label{eq3}
    \end{align}
    for every convex body $K\subset \mathbb R^n$ with centroid at the origin?
     We give an affirmative answer to this question.  Both bounds are sharp, and the  values of $C_1(\alpha, n)$ and $C_2(\alpha, n)$ are presented in Theorem $4$, which also discusses the equality cases. The case $n = 2$ for ($\ref{eq3}$) was obtained earlier in \cite{SY}, where it was used to prove a discrete version of Gr\"unbaum's inequality. It is important to note that in ($\ref{eq1}$) one bound automatically determines the other bound. On the other hand, the bounds in ($\ref{eq3}$) need to be shown separately.
    
    As an application of ($\ref{eq3}$) we obtain a generalization of the following result of Makai and Martini \cite{MM}; see also \cite{F}. Let $K \subset \RR^n$ be a convex body with centroid at the origin, then 
    \begin{align}
        \vol{K \cap \xi^\perp} \geq \left(\frac{n}{n+1}\right)^{n-1} \max_{t\in \RR} \Bvol{K \cap \left(\xi^\perp + t\xi\right)}, \quad \text{for all} \ \xi \in S^{n-1} \label{eq4}.
    \end{align}
    The bound is sharp, and equality holds again if, for example, $K$ is a cone as in ($\ref{eq2}$). In this paper, we establish an analogue of the inequality above for sections that do not necessarily pass through the centroid. Let $K \subset \RR^n$ be a convex body with centroid at the origin, $\alpha \in (-1, n)$, and $\xi \in S^{n-1}$. Consider the hyperplane
    \begin{align*}
        H_\alpha = \{x \in \RR^n : \inner{x}{\xi} = \alpha h_K(-\xi)\}.
    \end{align*}
    Then 
    \begin{align*}
        \vol{K \cap H_\alpha} \geq D(\alpha, n) \max_{t \in \mathbb{R}} \Bvol{K \cap \left(\xi^\perp + t\xi\right)},
    \end{align*}
    where $ D(\alpha, n)$ is a constant depending  only on $\alpha$ and $n$. The inequality is sharp, and the exact value of $D(\alpha, n)$ is discussed in Theorem $5$, along with equality cases.
    
\section{Preliminaries}

    The \textit{support function} $h_K : \RR^n \overset{}{\longrightarrow} \RR$ for a convex body $K \subset \RR^n$ is
    \begin{align*}
        h_K(\xi) = \max \soa{\inner{x}{\xi} : x \in K}.
    \end{align*}
    If $\xi \in S^{n-1}$ then $h_K(\xi)$ gives the signed distance from the origin to the supporting hyperplane for $K$ in the direction $\xi$. A result of Minkowski and Randon \cite[p.~58]{BF} states if $K \subset \RR^n$ is a convex body with centroid at the origin and $\xi \in S^{n-1}$, then
    \begin{align}
        \frac{1}{n}h_K(\xi) \leq h_K(-\xi) \leq nh_K(\xi). \label{eq5}
    \end{align}
    Note that the choice of bounds for $\alpha$ in Theorems $4$ and $5$ is a result of ($\ref{eq5}$).

    Let $\xi \in S^{n-1}$. The \textit{parallel section function} $A_{K,\xi}:\RR \overset{}{\longrightarrow} \RR$ for a convex body $K$ is
    \begin{align*}
        A_{K,\xi}(t) = \vol{K \cap \left(\xi^\perp + t\xi\right)}.
    \end{align*}
    
    \begin{lemma} Let $K \subset \RR^n$ be a convex body. Then $A_{K,\xi}^{1/(n-1)}$ is concave on its support, for every $\xi\in S^{n-1}$.
    \end{lemma}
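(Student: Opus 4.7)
The plan is to deduce the lemma from the Brunn--Minkowski inequality applied in the hyperplane $\xi^\perp$, following the classical argument of Brunn's concavity principle.

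First, I would fix $\xi \in S^{n-1}$ and write $K_t = K \cap (\xi^\perp + t\xi)$, so that $A_{K,\xi}(t) = |K_t|$ where the volume is taken in the $(n-1)$-dimensional hyperplane $\xi^\perp + t\xi$. After translating by $-t\xi$, each $K_t$ can be identified with a convex subset of $\xi^\perp \cong \RR^{n-1}$; I will denote this translate by $\widetilde{K}_t$, so that $|\widetilde{K}_t| = A_{K,\xi}(t)$. The support of $A_{K,\xi}$ is the interval $\{t : K_t \ne \emptyset\}$.

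Next, fix two points $t_0, t_1$ in the support of $A_{K,\xi}$ and $\lambda \in [0,1]$, and let $t_\lambda = (1-\lambda) t_0 + \lambda t_1$. The crucial geometric observation is that
\begin{align*}
\widetilde{K}_{t_\lambda} \supseteq (1-\lambda)\widetilde{K}_{t_0} + \lambda \widetilde{K}_{t_1},
\end{align*}
where the right-hand side is the Minkowski combination in $\xi^\perp$. This inclusion is an immediate consequence of the convexity of $K$: given $x_0 \in \widetilde{K}_{t_0}$ and $x_1 \in \widetilde{K}_{t_1}$, the point $(1-\lambda)(x_0 + t_0\xi) + \lambda(x_1 + t_1\xi)$ lies in $K$, and its projection along $\xi$ equals $(1-\lambda)x_0 + \lambda x_1$ at height $t_\lambda$.

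Finally, I apply the Brunn--Minkowski inequality in $\RR^{n-1}$ to the right-hand side, which gives
\begin{align*}
A_{K,\xi}(t_\lambda)^{1/(n-1)} = |\widetilde{K}_{t_\lambda}|^{1/(n-1)} \geq (1-\lambda)|\widetilde{K}_{t_0}|^{1/(n-1)} + \lambda|\widetilde{K}_{t_1}|^{1/(n-1)},
\end{align*}
which is exactly the concavity statement. Since this lemma is a classical and standard application of Brunn--Minkowski, I do not anticipate a genuine obstacle; the only point requiring mild care is recording that the inclusion above is the precise reason convexity of $K$ (rather than merely measurability) is being used.
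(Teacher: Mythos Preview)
Your proof is correct and is precisely the classical Brunn concavity argument. The paper does not supply its own proof of this lemma but simply refers the reader to \cite[p.~18]{Koldobsky}, where the same Brunn--Minkowski argument you wrote down appears; so your approach coincides with the one the paper implicitly invokes.
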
 
    \noindent For the proof of Lemma $1$, refer to \cite[p.~18]{Koldobsky}.
    
    Let $\xi \in S^{n-1}$. The \textit{volume cut-off function} $V_{K,\xi}:\RR \overset{}{\longrightarrow} \RR$ for a convex body $K \subset \RR^n$ is
    \begin{align*}
    V_{K,\xi}(t) = \int_t^\infty A_{K, \xi}(s) \, ds.
    \end{align*}
    The following result is also well-known, but we include a proof for completeness.

    \begin{lemma} Let $K \subset \RR^n$ be a convex body. Then $V^{1/n}_{K,\xi}$ is concave on its support, for every $\xi\in S^{n-1}$.
    \end{lemma}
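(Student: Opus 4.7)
The plan is to reduce the claim to the classical Brunn--Minkowski inequality applied to appropriate translates of the cut-off bodies. First I would note that for every $t$ in the interior of the support of $V_{K,\xi}$, the set $K_t := K \cap \{x \in \RR^n : \inner{x}{\xi} \geq t\}$ is a convex body with $\vol{K_t} = V_{K,\xi}(t)$. Given $t_1 < t_2$ in the support and $\lambda \in [0,1]$, set $t = (1-\lambda)t_1 + \lambda t_2$, and translate each cut-off body into the closed half-space $\{x : \inner{x}{\xi} \geq 0\}$ by defining $L_i := K_{t_i} - t_i\xi$ for $i = 1, 2$. Each $L_i$ is a convex body with $\vol{L_i} = V_{K,\xi}(t_i)$ by translation invariance of volume.

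The crux of the argument is to show the inclusion $(1-\lambda)L_1 + \lambda L_2 \subseteq K_t - t\xi$. For any $x_1 \in L_1$ and $x_2 \in L_2$, convexity of $K$ applied to the points $x_i + t_i \xi \in K$ yields
\begin{align*}
(1-\lambda)(x_1 + t_1\xi) + \lambda(x_2 + t_2\xi) = (1-\lambda)x_1 + \lambda x_2 + t\xi \in K.
\end{align*}
Since each $x_i$ satisfies $\inner{x_i}{\xi} \geq 0$, the same holds for the convex combination $(1-\lambda)x_1 + \lambda x_2$, and therefore $(1-\lambda)x_1 + \lambda x_2 + t\xi$ lies in $K_t$, which gives the claimed containment.

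Applying monotonicity of volume together with the Brunn--Minkowski inequality then produces
\begin{align*}
V_{K,\xi}(t)^{1/n} = \vol{K_t}^{1/n} \geq \vol{(1-\lambda)L_1 + \lambda L_2}^{1/n} \geq (1-\lambda)V_{K,\xi}(t_1)^{1/n} + \lambda V_{K,\xi}(t_2)^{1/n},
\end{align*}
which is precisely the concavity of $V_{K,\xi}^{1/n}$ on the interior of its support; concavity on the whole support follows by continuity. I do not expect a serious obstacle: the only substantive step is the Minkowski-sum inclusion above, and it follows cleanly from the convexity of $K$ and the fact that both $L_i$ lie in a common half-space whose bounding hyperplane passes through the origin (which is precisely why the translates were chosen this way).
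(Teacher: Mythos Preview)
Your proof is correct and follows essentially the same route as the paper: establish an inclusion between the Minkowski combination of the two cut-off bodies and the intermediate cut-off body, then apply Brunn--Minkowski. The only difference is that the paper works directly with the untranslated bodies, observing that $\lambda K_{t_1} + (1-\lambda) K_{t_2} \subseteq K_{\lambda t_1 + (1-\lambda) t_2}$ holds immediately from convexity of $K$ and linearity of $\inner{\cdot}{\xi}$, so your translation by $-t_i\xi$ is a harmless but unnecessary detour.
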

    \begin{proof}
        Let $\lambda \in [0, 1]$ and $t_1, t_2 \in \mathrm{supp}(V_{K,\xi}) $. Note that
        \begin{multline*}
        \lambda \Big(K \cap \{x \in \RR^n : \inner{x}{\xi} \geq t_1\}\Big) + (1-\lambda)\Big(K \cap \{x \in \RR^n: \inner{x}{\xi} \geq t_2\}\Big) \\ \subset \Big(K \cap \{x \in \RR^n : \inner{x}{\xi} \geq \lambda t_1 + (1-\lambda) t_2\}\Big).
        \end{multline*}
        This, together with  the Brunn-Minkowski inequality (see \cite[p.~415]{Gar} or \cite[p.~369]{Sch}), implies that
        \begin{multline*}
        \Bvol{K \cap \{x \in \RR^n : \inner{x}{\xi} \geq \lambda t_1 + (1-\lambda) t_2\}}^{1/n} \\
        \hspace{0.83em} \geq \Bvol{\lambda \Big(K \cap \{x \in \RR^n : \langle x,\xi\rangle \geq t_1\}\Big) + (1-\lambda)\Big(K \cap \{x \in \RR^n : \langle x,\xi\rangle \geq t_2\}\Big)}^{1/n} \\
        \geq \lambda \Bvol{K \cap \{x \in \RR^n : \inner{x}{\xi} \geq t_1\}}^{1/n} + (1-\lambda)\Bvol{K \cap \{x \in \RR^n : \inner{x}{\xi} \geq t_2\}}^{1/n},
        \end{multline*}
        which proves the result.
    \end{proof}

    Let $K \subset \RR^n$ be a convex body and $\xi \in S^{n-1}$. The \textit{Schwarz symmetral} of $K$ with respect to $\xi$ is the convex body $\calS_{\xi}K$ such that for all $t \in [-h_K\left(-\xi \right), h_K\left(\xi \right)]$, the set $\calS_{\xi} K\cap \left(\xi^\perp + t\xi \right)$ is an $(n-1)$-dimensional Euclidean ball centered at $t\xi$ and $A_{K, \xi}(t) = A_{(\calS_\xi K), \xi}(t)$.
    By construction we obtain
    \begin{align}
        h_{K}(\pm \xi) = h_{\calS_\xi K}(\pm \xi) \quad \text{and} \quad V_{K, \xi}(t) = V_{(\calS_\xi K), \xi}(t) \label{eq6},
    \end{align}
    for all $t \in \mathbb{R}$. Note that the centroid of $\calS_\xi K$ lies on the line $\ell = \soa{t \xi : t \in \RR}$ due to the rotational symmetry of $\calS_\xi K$ about $\ell$. See \cite[p.~62]{Gar} for more information on Schwarz symmetrizations.

\section{Main Results}

    Before proving our main result, we will provide a simple remark that we will apply throughout the rest of the paper.
    \begin{remark}
        Let $K \subset \RR^n$ be a convex body with centroid at the origin. Let $\alpha \in (-1, n)$ and $\xi \in S^{n-1}$. Denote $\widebar K = K + h_K(-\xi)\xi$ and consider the two halfspaces $H_\alpha^+ = \{x\in\mathbb R^n : \langle x,\xi \rangle \geq \alpha h_K(-\xi)\}$ and $\widebar H_\alpha^+ = \soa{x \in \RR^n : \inner{x}{\xi} \geq (\alpha + 1)\inner{g(\widebar K)}{\xi}}$.
        Then
            $$\vol{K \cap H^+_\alpha} = \vol{\widebar K \cap \widebar H_\alpha^+}.$$
    \end{remark}
    \begin{proof}
    	Since $\widebar K$ is a translate of $K$, it is easy to see that the centroid of $\widebar K$ is translated by the same vector, i.e., $g(\widebar K)= h_K(-\xi)\xi$. Therefore,
    \begin{multline*}	\widebar H_\alpha^+  = \soa{x \in \RR^n : \inner{x}{\xi} \geq (\alpha + 1)\inner{g(\widebar K)}{\xi}}\\  = \soa{x \in \RR^n : \inner{x}{\xi} \geq (\alpha + 1)h_K(-\xi)}= H_\alpha^+ + h_K(-\xi)\xi,
    \end{multline*}
        and the result follows.
    \end{proof}
    Statements analogous  to Remark $3$ also hold when $\geq$ is replaced with $\leq$ or $=$. We will now prove our main result.
    
    \begin{theorem}
        Let $K \subset \RR^n$ be a convex body with centroid at the origin. Let $\alpha \in (-1, n)$ and $\xi \in S^{n-1}$. Consider the half-space
        \begin{align*}
        H_\alpha^+ = \{x\in\mathbb R^n : \langle x,\xi \rangle \geq \alpha h_K(-\xi)\}.
        \end{align*}
        Then 
        \begin{align*}
        C_1(\alpha, n) \leq \frac{\left\lvert K \cap H_\alpha^+ \right\rvert}{\left\lvert K \right\rvert} \leq C_2(\alpha, n). 
        \end{align*}
        where 
        \begin{align*}
        C_1(\alpha, n) = \begin{cases} 
              \left(\frac{n-\alpha}{n+1}\right)^n, & \mbox{ if } \alpha \in (-1, 0], \\
              \left(\frac{n}{n+1}\right)^n(\alpha+1)^{n-1}(1-\alpha n) , & \mbox{ if } \alpha \in (0, 1/n), \\
              0 , & \mbox{ if } \alpha \in [1/n, n),
           \end{cases}
        \end{align*}
           and 
        \begin{align*}
        C_2(\alpha, n) = \begin{cases} 
              1 - \left(\frac{n(\alpha + 1)}{n+1}\right)^n , & \mbox{ if } \alpha \in (-1, 0], \\
              c(\alpha, n) , & \mbox{ if } \alpha \in (0, n). \\
           \end{cases}
        \end{align*}
        $c(\alpha, n)$ is a constant depending only on $\alpha$ and $n$. Determining the explicit value of $c(\alpha, n)$ involves finding the roots of a high-degree rational function. The lower bounds and upper bounds are sharp, and equality cases are discussed in the proof below.
    \end{theorem}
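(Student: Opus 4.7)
The plan is to reduce the inequality to a one-dimensional optimization over concave functions and identify the extremals as cones or bi-cones. First I would apply Schwarz symmetrization to $K$ with respect to $\xi$: since $A_{K,\xi} \equiv A_{\calS_\xi K, \xi}$, the quantities $\vol{K}$, $\vol{K \cap H_\alpha^+}$, and $h_K(\pm\xi)$ are all preserved, and because $\calS_\xi K$ is rotationally symmetric about $\RR\xi$ its centroid lies on this axis with $\xi$-component equal to $\vol{K}^{-1}\int t\, A_{K,\xi}(t)\, dt = 0$. Hence we may assume $K$ is rotationally symmetric about $\RR\xi$ and is described by its radius function $f(t) = (A_{K,\xi}(t)/\vol{\ball{2}{n-1}})^{1/(n-1)}$ on $[-a,b]$, where $a = h_K(-\xi)$ and $b = h_K(\xi)$. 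By Lemma 1, $f$ is concave, and the centroid condition reads $\int_{-a}^b t\, f(t)^{n-1}\, dt = 0$.

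The problem now becomes: extremize
\[
\frac{\int_{\alpha a}^b f(t)^{n-1}\, dt}{\int_{-a}^b f(t)^{n-1}\, dt}
\]
over triples $(a,b,f)$ with $f$ concave, nonnegative on $[-a,b]$, and satisfying the centroid constraint. A variational argument exploiting the concavity of $f$ and the single integral constraint shows that the extremum is attained when $f$ is piecewise affine with at most two pieces, i.e., $K$ is a (single) cone or a bi-cone. I would then enumerate these candidates and evaluate the objective explicitly.

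For the lower bound three regimes appear. When $\alpha \in (-1, 0]$, the extremum is the classical Gr\"unbaum cone, which has $b = na$ and yields $((n-\alpha)/(n+1))^n$. When $\alpha \in (0, 1/n)$, the extremum is a bi-cone whose apex is forced at $c = (a - b)/(n-1)$ by the centroid constraint; optimizing the objective in $r := b/a$ by equating its logarithmic derivative to zero yields the critical point $r^\ast = [n + \alpha(1 + n - n^2)]/(1 + n^2\alpha)$, and back-substitution telescopes to $(n/(n+1))^n (\alpha+1)^{n-1}(1 - n\alpha)$. When $\alpha \in [1/n, n)$, this formula becomes non-positive and the ratio can be driven to $0$ by degenerating bi-cones, confirming $C_1 = 0$. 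The upper bounds proceed analogously: for $\alpha \in (-1, 0]$ the reverse Gr\"unbaum cone is extremal and gives $1 - (n(\alpha+1)/(n+1))^n$; for $\alpha \in (0, n)$ a different bi-cone is extremal, but its optimal aspect ratio satisfies a rational equation of high degree, which explains the implicit form of $c(\alpha, n)$.

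The main obstacle is justifying the reduction to piecewise-affine $f$: if $f$ has a smooth arc on a subinterval, one should be able to replace it by a two-piece chord interpolating $f$ at suitably chosen points so as to preserve the volume and centroid constraints while strictly improving the objective in the desired direction. Once this reduction is in place, the remaining work is an explicit (if tedious) bookkeeping of the extremal cones and bi-cones, and the equality cases are precisely the (Schwarz-symmetric) cones and bi-cones identified above.
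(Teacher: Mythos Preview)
Your overall strategy (Schwarz symmetrize, then optimize over one-dimensional concave profiles) starts the same way as the paper, and your lower-bound computations for the bi-cone when $\alpha\in(0,1/n)$ are correct and match the paper's extremal. However there are two genuine problems.

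First, the step you flag as ``the main obstacle'' really is a gap: you never prove that the extremum over concave $f$ with the centroid constraint is attained on a two-piece affine profile. The perturbation you describe (replace a smooth arc by a chord preserving volume and centroid) does not by itself force only two pieces, and making this rigorous is exactly the hard part. The paper avoids this entirely. For $\alpha\in(-1,0]$ it uses two direct tricks that need no variational reduction: the upper bound follows from the dilation inclusion $K\subset (\alpha+1)^{-1}K$ (after translating so $0\in\partial K$) combined with Gr\"unbaum's inequality, and the lower bound follows from the concavity of $V_{K,\xi}^{1/n}$ (Lemma~2) applied to the convex combination $(\alpha+1)\langle g(K),\xi\rangle = -\alpha\cdot 0 + (\alpha+1)\langle g(K),\xi\rangle$, again combined with Gr\"unbaum. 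For $\alpha\in(0,n)$ the paper does not argue variationally either: it builds an explicit competitor (a truncated cone $L$ for the upper bound, a double cone $M$ for the lower bound) with the same volumes on each side of $H_\alpha$ as $K$, checks via elementary centroid comparisons that $\langle g(L),\xi\rangle\le\langle g(K),\xi\rangle$ (respectively $\langle g(M),\xi\rangle\ge\langle g(K),\xi\rangle$), and hence that the ratio for the competitor, with the hyperplane placed at \emph{its own} centroid, is worse. This reduces the problem to a one-parameter family without any appeal to a general structure theorem for extremal concave functions.

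Second, your description of the upper-bound extremal for $\alpha\in(0,n)$ is wrong: it is not a bi-cone. The paper's competitor $L$ is a truncated cone (a frustum) with $A_{L,\xi}(t)=(mt+b)^{n-1}$, i.e.\ a single affine piece that need not vanish at either endpoint; the optimization is over $z=b/m\in(-\infty,-1]\cup[0,\infty)$, which is what produces the high-degree rational equation for $c(\alpha,n)$. If your variational argument were carried out carefully it would have to allow $f$ to be strictly positive at the endpoints of its support, and the extremals for the upper bound in this range would emerge as one-piece affine profiles rather than tent-shaped bi-cones.
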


    \begin{proof}
    Given $K$ as written above, consider the Schwarz symmetral $\calS_\xi K$. Using the observations in ($\ref{eq6}$) and Fubini's theorem we can conclude that the centroid of $\calS_\xi K$ is at the origin and that $|K \cap H_\alpha^+| = |(\calS_\xi K) \cap H_\alpha^+|$ for all $\alpha \in (-1, n)$. Therefore we will prove the result with $\calS_\xi K$, which we will denote by $K$ for brevity. By Remark $3$, it suffices to find bounds for $|\widebar K \cap \widebar H_\alpha^+|$, and after further abuse of notation, we will write $K$ for $\widebar K$ and $H_\alpha^+$ for $\widebar H_\alpha^+$. We will also write $H_\alpha = \soa{x \in \RR^n : \inner{x}{\xi} = (\alpha + 1)\inner{g(K)}{\xi}}$ and $H_\alpha^- = \soa{x \in \RR^n : \inner{x}{\xi} \leq (\alpha + 1)\inner{g(K)}{\xi}}$. Let us remark that $\xi^\perp$ is now a supporting hyperplane of $K$ and $0 \in \partial K$.

    Let us first consider the case $\alpha \in (-1, 0]$. We will obtain the upper bound. Observe that
    \begin{align*}
        \vol{K \cap H_\alpha^-} = \vol{K} - \vol{K \cap H_\alpha^+}.
    \end{align*}
    Denote by $K/(\alpha + 1)$ the dilation of $K$ by a factor of $1/(\alpha + 1) > 1$, and also write $H_\alpha^-/(\alpha + 1) = \soa{x \in \RR^n : \inner{x}{\xi} \leq \inner{g(K)}{\xi}}$. Since $0 \in K$, we obtain $K \subset K/(\alpha + 1)$ and thus
    \begin{multline*}
    \vol{K \cap H_\alpha^-} = (\alpha+1)^n \vol{\frac{1}{\alpha+1}K \cap \frac{1}{\alpha+1}H_\alpha^-} \\ 
    \hspace{-11.8em} \geq (\alpha+1)^n \Bvol{K \cap \{x \in \mathbb{R}^n : \langle x,\xi\rangle \leq \inner{g(K)}{\xi} \}}\\ = (\alpha + 1)^n \Bvol{\big(K - g(K)\big)\cap \{x \in \mathbb{R}^n : \langle x,\xi\rangle \leq 0 \}} \geq (\alpha + 1)^n\left(\frac{n}{n+1}\right)^n \left\lvert K \right\rvert,
    \end{multline*}
    where we used Gr\"unbaum's inequality  ($\ref{eq1}$). Therefore, for $\alpha \in (-1, 0]$, we have
    \begin{align*}
    \frac{\left\lvert K \cap H_\alpha^+\right\rvert}{\left\lvert K \right\rvert} \leq 1 - \left(\frac{n(\alpha + 1)}{n+1}\right)^n,
    \end{align*}
    as desired. 
    
    We will now obtain the lower bound for $\alpha \in (-1, 0]$. By Lemma $2$, $V^{1/n}_{K,\xi}$ is concave on its support. Hence,
    \begin{align*}
    \left\lvert K \cap H_\alpha^+\right\rvert^{1/n}
    &= {V_{K,\xi}}^{{1/n}}\big((\alpha + 1)\inner{g(K)}{\xi}\big)= {V_{K,\xi}}^{{1/n}}\big(- \alpha \cdot 0 + (\alpha + 1)\inner{g(K)}{\xi}\big)  \\
    &\geq -\alpha{V_{K,\xi}}^{{1/n}}(0) + (\alpha + 1){V_{K,\xi}}^{{1/n}}\big(\inner{g(K)}{\xi}\big).
    \end{align*}
    Using Gr\"unbaum's inequality and the observation that $V_{K,\xi} (0) = \left\lvert K \right\rvert$, we have
    $$
    \left\lvert K \cap H_\alpha^+ \right\rvert^{1/n}\geq -\alpha \left\lvert K \right\rvert^{1/n} + (\alpha + 1)\left(\frac{n}{n+1}\right) \left\lvert K \right\rvert^{1/n},
    $$
    which implies for $\alpha \in (-1, 0]:$
    \begin{align*}
    \left(\frac{n-\alpha}{n+1}\right)^n \leq \frac{\left\lvert K \cap H_\alpha^+ \right\rvert}{\left\lvert K \right\rvert}.
    \end{align*}
    Thus, we have shown the bounds for $\alpha \in (-1, 0]$.

    We will now investigate the case $\alpha \in (0, n)$. We will prove the upper bound first. We can assume that $ K \cap H_\alpha$ is non-empty, otherwise the bound is trivial. Let $\ball{2}{n-1}$ be the unit $(n-1)$-dimensional Euclidean ball in $\xi^\perp$. By continuity we can find $r_1 \geq 0$ such that 
    \begin{align*}
    K \cap \xi^\perp \subset r_1 \ball{2}{n-1} \quad \text{and}\quad \bvol{\conv(r_1\ball{2}{n-1}, K \cap H_\alpha)} = \vol{K \cap H_\alpha^{-}}.
    \end{align*}
    Denote $L^- = \conv(r_1\ball{2}{n-1}, K \cap H_\alpha)$. Then again by continuity, there are $r_2 \geq 0$ and $\mu$ with $(\alpha + 1)\inner{g(K)}{\xi} < \mu < h_K(\xi)$ such that
    \begin{align*}
        \bvol{\conv(K \cap H_\alpha, r_2\ball{2}{n-1}+ \mu \xi)} = \vol{K \cap H_\alpha^+}
    \end{align*}
    and
    \begin{align*}
    L^- \cup \conv( K \cap H_\alpha,r_2\ball{2}{n-1}+\mu \xi) = \conv(r_1\ball{2}{n-1}, r_2\ball{2}{n-1}+\mu\xi).
    \end{align*}
    Denote $L^+ = \conv (K \cap H_\alpha, r_2\ball{2}{n-1}+ \mu \xi)$. Then $L = L^- \cup L^+$ is a truncated cone whose sections parallel to $\xi^\perp$ are Euclidean balls; see Figure \ref{fig1}.
    \begin{figure}[h]
    \centering
    \includegraphics[scale=0.54]{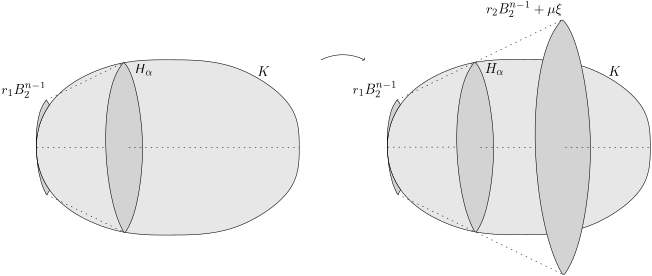}
    \caption{Constructing $r_1\ball{2}{n-1}$ and $r_2\ball{2}{n-1}+\mu \xi$.} \label{fig1}
    \end{figure}
    Note that
    $\inner{g(L^-)}{\xi} \leq \inner{g(K\cap H_\alpha^-)}{\xi}$ and $\inner{g(L^+)}{\xi} \leq \inner{g(K\cap H_\alpha^+)}{\xi}$, and thus
    \begin{align*}
    \inner{g(L)}{\xi} \leq \inner{g(K)}{\xi}.
    \end{align*}
    By construction, we have $\vol{K} = \vol{L}$ and
    \begin{multline*}
        \left\lvert K \cap \{x \in \mathbb{R}^n : \inner{x}{\xi} \geq (\alpha + 1)\langle g(K), \xi\rangle\} \right\rvert  \\
    \hspace{4.4em}\qquad \qquad \qquad \qquad\qquad=\left\lvert L \cap \{x \in \mathbb{R}^n : \inner{x}{\xi} \geq (\alpha + 1)\langle  g(K), \xi \rangle\} \right\rvert \\
    \qquad \qquad \qquad \qquad \qquad\leq \left\lvert L \cap \{x \in \mathbb{R}^n : \inner{x}{\xi} \geq (\alpha + 1)\langle g(L), \xi\rangle\}\right\rvert.
    \end{multline*}
    Hence, it suffices to work with $L$ instead of $K$. After rescaling, we may assume that $h_{L}(\xi) = 1$ and then for $0 \leq t \leq 1$ we can assume that
    \begin{align}
        A_{L, \xi}(t) = (mt + b)^{n-1}, \label{eq9}
    \end{align}
    where either $m = 0$ and $b > 0$, or $b \geq 0$ and either ($1$) $m > 0$ or ($2$) $m < 0$ and $m + b \geq 0$. We will focus on the case $m \neq 0$ first, and address the case $m = 0$ later. By Fubini's theorem and ($\ref{eq9}$) we obtain
    \begin{align*}
        \left\lvert L \right\rvert  = \int_{0}^1 A_{L, \xi}(t)\, dt  =  \frac{(b+m)^n - b^n}{mn},
    \end{align*}
    and similarly we find
    \begin{align*}
    \inner{g(L)}{\xi} = \frac{1}{\vol{L}}\int_{0}^1 t A_{L, \xi}(t) \, dt = \frac{b^{n+1} + (mn - b)(b+m)^n}{m(n+1)\big( (b+m )^n -b^n\big)}.
    \end{align*}
    Denote $G_L = (\alpha + 1)\inner{g(L)}{\xi}$. Now we can compute
    \begin{align*}
        \frac{\vol{L \cap H_\alpha^+}}{\vol{L}} = \frac{1}{\vol{L}}\int_{G_L}^1 A_{L, \xi}(t) \, dt =\frac{(b+m)^n - (b+mG_L)^n}{(b+m)^n - b^n}.
    \end{align*}
    Denote by $\varphi$ the above equation of $m$ and $b$ for when $m \neq 0$. If $b > 0$ then we have
    $\varphi(m, b) \xrightarrow{\: m \to 0 \:} (1 - \alpha)/2$, which is readily verified to agree with the case $m = 0$. Making the change of variables $z = b/m$ allows us to write $\varphi$ as a function of $z$. That is, we obtain
    \begin{align*}
        \varphi(z) =\frac{(z+1)^n - (z+G_L)^n}{(z+1)^n - z^n},
    \end{align*}
    where
    \begin{align*}
    G_L = (\alpha + 1)\frac{z^{n+1} + (n - z)(z + 1)^n}{(n+1)\big( (z + 1)^n -z^n\big)},
    \end{align*}
    for $z \in (-\infty, -1] \cup [0, \infty)$. For $\alpha \in (0, n)$, the rational function $\varphi$ is not monotonic, so determining $c(\alpha, n)$ becomes an unfeasible task, as this involves finding roots of high-degree rational functions. When $n = 2$ one can explicitly solve for $c(\alpha, n)$ (see \cite{SY} for the derivation):
    \begin{align*}
    c(\alpha, 2) = 
       \begin{cases} 
           \frac{5-3\alpha}{9\left(\alpha + 1\right)},& \mbox{ if } \alpha \in (0, 1), \\
           \frac{1}{9}(2-\alpha)^2, &  \mbox{ if } \alpha \in [1, 2). \\
        \end{cases}
    \end{align*}
    When $n\ge 3$ and $\alpha \in (0, n)$ all we can write is
    \begin{align*}
        \frac{\vol{K \cap H_\alpha^+}}{\vol{K}} \leq c(\alpha, n) = \sup\{\varphi(z): z \in (-\infty, -1] \cup [0, \infty)\}.
    \end{align*}

    We will now obtain the lower bound for $\alpha \in (0, n)$. Note for $\alpha \in [1/n, n)$ if $K$ is the cone
    \begin{align*}
        K = \conv\left(-\frac{n}{n+1}\xi,\frac{1}{n+1}\xi + \ball{2}{n-1}\right),
    \end{align*}
    then $\vol{K \cap H_\alpha^+} = 0$. Therefore we cannot do better than $C_1(\alpha, n) = 0$. 
    
    Now assume that $\alpha \in (0, 1/n)$. By continuity, there is $v \geq h_K(\xi)$ such that
    \begin{align*}
        \bvol{\conv(K \cap H_\alpha, v\xi)} = \vol{K \cap H_\alpha^+}.
    \end{align*}
    Denote $M^+ = \mathrm{conv}(K \cap H_\alpha, v\xi)$. Then again by continuity there are $r$ and $\beta$ with $r > 0$ and $0 \le \beta \le (\alpha + 1)\inner{g(K)}{\xi}$ such that
    \begin{align*}
    \mathrm{conv}(r\ball{2}{n-1} + \beta\xi, M^+) = \mathrm{conv}(r\ball{2}{n-1} + \beta\xi, v\xi),
    \end{align*}
    and 
    \begin{align*}
    \bvol{\conv(0, r \ball{2}{n-1} + \beta\xi, K \cap H_\alpha)} = \vol{K\cap H_\alpha^-}.
    \end{align*}
    Denote $M^- = \conv(0, r \ball{2}{n-1} + \beta\xi, K \cap H_\alpha)$. If $\beta>0$ then $M = M^- \cup M^+$ is a convex body formed by the union of two cones with a common base in $\xi^\perp + \beta\xi$; see Figure~\ref{fig2}. Such a body will be called a double cone. If $\beta =0$ then $M$ is a cone.
    \begin{figure}[h]
    \centering
    \includegraphics[scale=0.37]{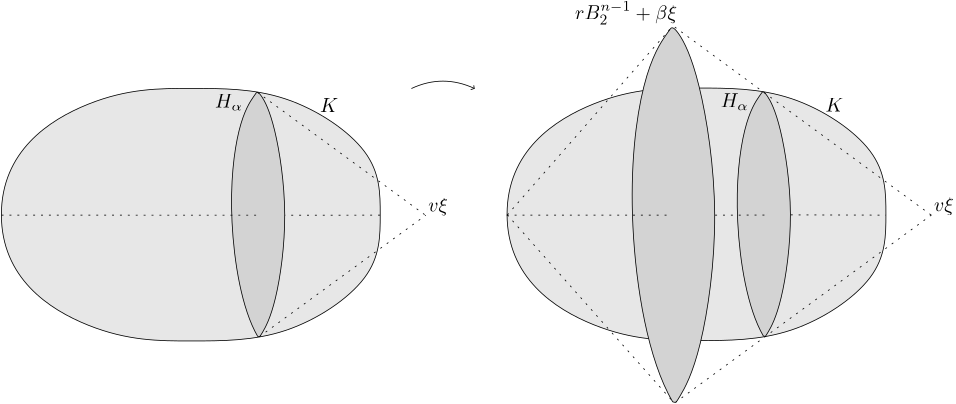}
    \caption{Constructing $r\ball{2}{n-1} + \beta\xi$ and $v\xi$.}\label{fig2}
    \end{figure}
    
    \noindent Note that
    $\inner{g(M^-)}{\xi} \geq \inner{g(K\cap H_\alpha^-)}{\xi}$ and $\inner{g(M^+)}{\xi} \geq \inner{g(K\cap H_\alpha^+)}{\xi}$, and thus
    \begin{align*}
    \inner{g(M)}{\xi} \geq \inner{g(K)}{\xi}.
    \end{align*}
    As a result we have constructed a convex body $M$ where $\left\lvert K \right\rvert = \left\lvert M \right\rvert$ and
    \begin{align*}
    & \left\lvert K \cap \{x \in \mathbb{R}^n : x_1 \geq (\alpha + 1)\langle g(K), \xi\rangle\} \right\rvert  \\
    &\qquad \qquad \qquad \qquad\qquad=\left\lvert M \cap \{x \in \mathbb{R}^n : x_1 \geq (\alpha + 1)\langle  g(K), \xi\rangle\} \right\rvert \\
    &\qquad \qquad \qquad \qquad \qquad\geq \left\lvert M \cap \{x \in \mathbb{R}^n : x_1 \geq (\alpha + 1)\langle g(M), \xi\rangle\}\right\rvert.
    \end{align*}
    Hence, it suffices to work with $M$ instead of $K$. After rescaling, we may assume that $h_{M}(\xi) = 1$ and $\vol{r \ball{2}{n-1}} = n.$ Define \begin{align*}
M_1 = M \cap \{x \in \mathbb{R}^n \mid \langle x,\xi \rangle \leq \beta\} \quad \text{and} \quad M_2 = M \cap \{x \in \mathbb{R}^n \mid \langle x,\xi  \rangle \geq \beta\}, 
\end{align*}
to be the cones forming $M$. $M_1$ is degenerate if $\beta =0$. Since the heights of $M_1$ and $M_2$ are $ \beta$ and $1-\beta$ respectively, and $\vol{r\ball{2}{n-1}} = n$, we get $\left\lvert M_1 \right\rvert =\beta$ and $\left\lvert M_2 \right\rvert=1-\beta$. 
    It is a well-known fact that the centroid of a cone in $\mathbb R^n$ divides its height in the ratio $[1:n]$. Hence, we obtain $\langle g(M_1), \xi\rangle = (\beta n)/(n+1) $ and $\langle g(M_2), \xi\rangle = (\beta n+1)/(n+1)$, and thus it follows that
    \begin{multline*}
    \langle g(M), \xi\rangle = \left\lvert M_1 \right\rvert \langle g(M_1), \xi\rangle+\left\lvert M_2 \right\rvert\langle g(M_2), \xi\rangle\\
    =\beta \frac{\beta n}{n+1}+(1-\beta) \frac{\beta n+1}{n+1}= \frac{\beta(n-1) + 1}{n+1}.
    \end{multline*}
    Denote $G_M = (\alpha + 1)\langle g(M), \xi\rangle$. We are interested in computing the volume of the intersection of $M$ with the halfspace $H_\alpha^+=\{x \in \mathbb R^n : \inner{x}{\xi} \geq G_M\}$. We will consider two cases, first when $0 \le \beta \leq G_M$, and then when $G_M \leq \beta < 1.$ These cases are equivalent to $0 \le \beta \leq \frac{\alpha+1}{2-(n-1)\alpha}$ and $\frac{\alpha+1}{2-(n-1)\alpha} \leq \beta < 1$, respectively. In the first case, note that $M\cap H_\alpha^+$ is a cone homothetic to $M_2$ with the homothety coefficient equal to $(1-G_M)/(1-\beta).$ Therefore,
    \begin{align*}
    \left\lvert M \cap H_\alpha^+ \right\rvert & =\left(\frac{1-G_M}{1-\beta }\right)^n (1-\beta)
    \nonumber = \frac{(1-G_M)^n}{(1-\beta)^{n-1}}.
    \end{align*}
    In the second case, $M \cap H_\alpha^-$ is a cone homothetic to $M_1$ with the homothety coefficient equal to $G_M / \beta.$ Thus,
    \begin{align*}
    \left\lvert M \cap H_\alpha^+ \right\rvert &=1-\left\lvert M \cap H_\alpha^-\right\rvert= 1- \left(\frac{G_M}{\beta} \right)^n \beta = 1- \frac{G_M^n}{\beta^{n-1}}.
    \end{align*}
    Summarizing, $\left\lvert M \cap H_\alpha^+ \right\rvert$ is equal to the following piecewise function
     $$\psi(\beta) = \begin{cases}
     \frac{\left(1-G_M\right)^n}{(1-\beta)^{n-1}}, & \mbox{ if } 0 \le \beta \leq \frac{\alpha+1}{2-(n-1)\alpha},\\
       1- \frac{G_M^n}{\beta^{n-1}}  , & \mbox{ if } \frac{\alpha+1}{2-(n-1)\alpha} \leq \beta < 1. \end{cases}
     $$
         Our goal is to find the infimum of $\psi$ on $[0,1)$ when $\alpha \in (0, 1/n)$. Calculations show that the derivative of $\psi$ vanishes at $\beta_0= \big((n+1)\alpha\big)/(\alpha + 1)\in (0, \frac{\alpha+1}{2-(n-1)\alpha})$. Furthermore, $\psi$ is decreasing on $[0,\beta_0)$ and increasing on $(\beta_0, 1)$. Thus, the minimum of $\psi$ is
    $$
     \psi(\beta_0)  = \left(\frac{n}{n+1}\right)^n(\alpha+1)^{n-1}(1-\alpha n),
    $$
    which is the value of $C_1(\alpha,n)$ when $\alpha \in (0, 1/n)$.

    We will now discuss the equality cases. Recall that in both the upper   and lower bound constructions, we performed   the Schwarz symmetrization to transform the sections of $K$ in the direction of $\xi$ into $(n-1)$-dimensional Euclidean balls. We also performed dilations and translations. If we have an equality body $K$ for either bound under these operations, then we can undo these operations to produce a new body whose sections are no longer $(n-1)$-dimensional Euclidean balls but instead $(n-1)$-dimensional convex bodies homothetic to each other.

    We will start classifying equality cases for the upper bound. For $\alpha \in (-1, 0]$, we have equality from the equality conditions of Gr\"unbaum's theorem, in other words $L = \mathrm{conv}(B, v)$ is a cone with its base $B$ being an $(n-1)$-dimensional convex body lying parallel to $\xi^\perp$ in $\xi^+$ and vertex $v$ lying in $\xi^- = \{x \in \mathbb{R}^n : \langle x,\xi \rangle \leq 0 \}$. For $\alpha \in (0, n)$, $L$ is  the convex hull of an $(n-1)$-dimensional convex body $B$ and a homothetic copy of $B$. The coefficient of homothety is not explicit, but it can be found numerically for each $n$ and $\alpha$. In particular, up to translation and dilation, $L$ is the convex hull of 
an $(n-1)$-dimensional convex body $B$ in  $\xi^\perp$ and its homothetic copy $\lambda B$ in $\xi^\perp+\xi$, where $\lambda = 1+\frac{1}{z_0}$ and $z_0$ is the point where the maximum of $\varphi$ is attained.
    \begin{figure}[h]
    \centering
    \includegraphics[scale=0.23]{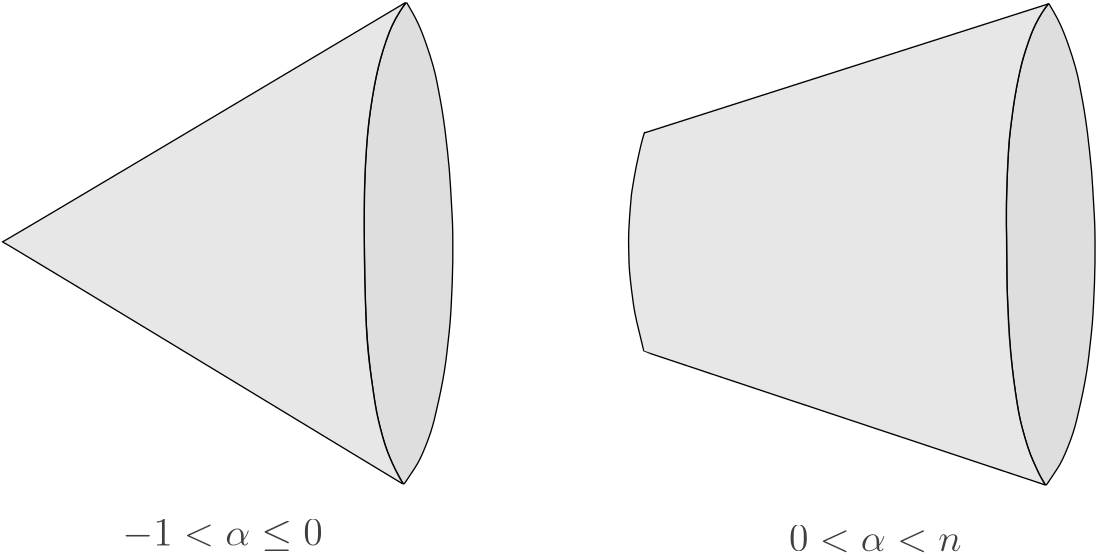}
    \caption{Extremizing shapes for the upper bound.}\label{fig3}
\end{figure}
    
    We will now classify equality cases for the lower bound. For $\alpha \in (-1, 0]$, we have equality from the equality conditions of Gr\"unbaum's theorem, in other words $M = \mathrm{conv}(B, v)$ is a cone with its base $B$ being an $(n-1)$-dimensional convex body lying parallel to $\xi^\perp$ in $\xi^-$ and vertex $v$ lying in $\xi^+$. For $\alpha \in (0, 1/n)$, the extremizing body is the union of two cones which share the same base, and whose heights are proportional to each other with the coefficient  $\beta_0/(1-\beta_0)$.  Recall that $\beta_0$ is the unique point of minimum for the function $\psi$. As $\alpha$ increases from $0$ towards $1/n$, $\beta_0$ increases from $0$ to $1$, so $B$ shifts in the direction of $\xi$. When $\alpha =1/n$ we have the equality $|K\cap H_\alpha^+|=0$ only in the case when $h_K(\xi)= \frac1n h_K(-\xi)$ (assuming the centroid of $K$ is at the origin). The latter is possible only  when $K$ is a cone, which follows from the equality case in  \eqref{eq5}.   For $\alpha \in ( 1/n, n)$,  we have  many bodies centered at the origin with property $h_K(\xi)< \alpha h_K(-\xi)$. All of them satisfy $|K\cap H_\alpha^+|=0$. 
    
    \newpage
    
    \end{proof}
    \begin{figure}[h]
    \centering
    \includegraphics[scale=0.26]{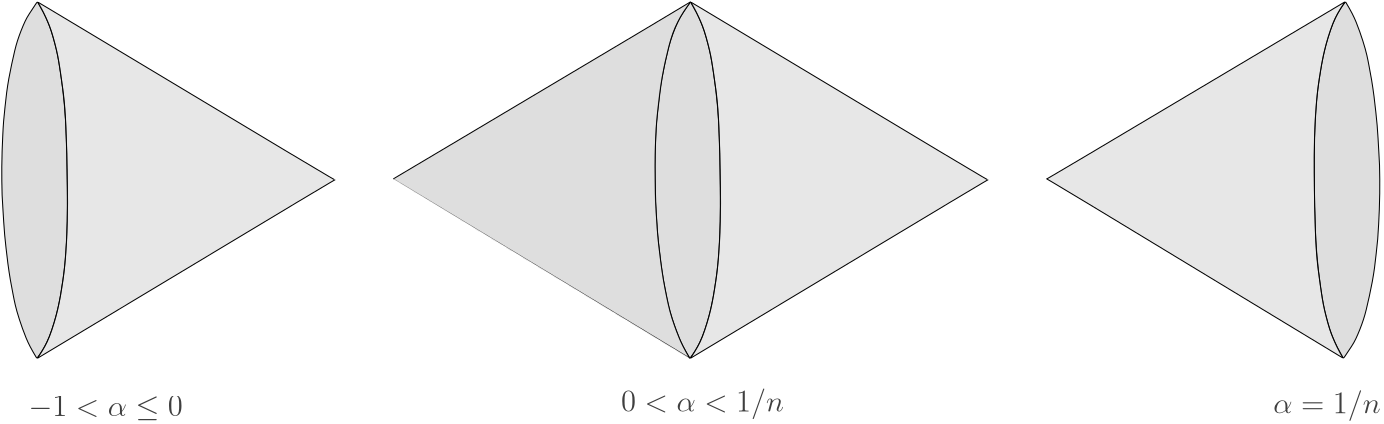}
    \caption{Extremizing shapes for the lower bound.}\label{fig3}
    \end{figure}
    As an application of Theorem $4$ we obtain a generalization of the result of Makai and Martini \cite{MM} stated in the introduction.
    \begin{theorem}
    Let $K $ be a convex body with centroid at the origin. Let $\xi \in S^{n-1}$ and $\alpha \in (-1, n)$. Consider the hyperplane
    \begin{align*}
    H_\alpha = \{x\in\mathbb R^n : \langle x,\xi \rangle = \alpha h_K(-\xi)\}.
    \end{align*}
    Then 
    \begin{align*}
    \left \lvert K \cap H_\alpha\right \rvert \geq D(\alpha, n)\sup_{t \in \mathbb{R}}\left \lvert K \cap \big(\xi^\perp + t\xi\big)\right \rvert,
    \end{align*}
    where 
    \begin{align*}
    D(\alpha, n) =  \begin{cases}
          \left(\frac{n(\alpha+1)}{n+1}\right)^{n-1} , &\mbox{ if } \alpha \in (-1, 0], \\
         \left(\frac{n-\alpha}{n+1}\right)^{n-1} , &\mbox{ if } \alpha \in (0, 1/n], \\
          0 , &\mbox{ if } \alpha \in (1/n, n).\\
       \end{cases}
    \end{align*}
    The bound is sharp and equality cases are discussed in the proof below.
    \end{theorem}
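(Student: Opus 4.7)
The plan is to combine Lemma~1 (concavity of $A_{K,\xi}^{1/(n-1)}$) with the Makai--Martini inequality (\ref{eq4}) and, for the intermediate regime $\alpha \in (0, 1/n]$, with a cone-comparison technique modeled on the lower-bound construction of Theorem~4. As a preliminary reduction, I would pass to the Schwarz symmetral $\calS_\xi K$, which preserves the parallel section function $A(t) := A_{K,\xi}(t)$, the centroid on the $\xi$-axis, and $h_K(\pm\xi)$; this lets us assume $K$ is rotationally symmetric about the $\xi$-axis. Set $a = h_K(-\xi)$, $b = h_K(\xi)$, $t_0 = \alpha a$, and $M = \sup_t A(t)$.

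For $\alpha \in (-1, 0]$, the point $t_0$ lies in $[-a, 0]$, so one can write $t_0 = (-\alpha)(-a) + (1+\alpha)(0)$ as a convex combination. Concavity of $A^{1/(n-1)}$ combined with $A(-a) = 0$ gives
\[A(t_0)^{1/(n-1)} \geq (1+\alpha)\, A(0)^{1/(n-1)},\]
and then (\ref{eq4}) yields $A(t_0) \geq (n(1+\alpha)/(n+1))^{n-1} M = D(\alpha, n) M$, with equality at the Grünbaum cone (\ref{eq2}).

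For $\alpha \in (0, 1/n]$, concavity alone is sharp only at the extremal reverse cone ($b = na$), so I would mirror the construction in the lower-bound portion of Theorem~4: given $K$, construct a double cone $N$ (union of two cones with a common base parallel to $\xi^\perp$) with $|N| = |K|$, maximum section $M$, and $\langle g(N), \xi\rangle \geq \langle g(K), \xi\rangle = 0$, together with the pointwise inequality $A_K(t_0) \geq A_N(t_0)$ arising from the same mass-redistribution argument used for the half-space comparison in Theorem~4. Minimizing $A_N(t_0)/M$ over such admissible $N$, the minimum is attained by a degenerate double cone, namely the reverse cone with $h_N(\xi) = n\, h_N(-\xi)$ (forced by the equality in (\ref{eq5})), for which a direct computation gives $A_N(t_0) = ((n-\alpha)/(n+1))^{n-1} M = D(\alpha, n)M$. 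For $\alpha \in (1/n, n)$, the bound $D(\alpha, n) = 0$ is trivially attained by a forward Grünbaum cone for which $H_\alpha$ lies outside its support. The principal obstacle will be the pointwise inequality $A_K(t_0) \geq A_N(t_0)$ in the middle regime, which is strictly more delicate than the half-space volume comparison in Theorem~4, since the section at a single height is more sensitive to the local structure of $K$ than an integrated quantity.
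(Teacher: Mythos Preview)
Your treatment of the case $\alpha \in (-1,0]$ is correct and is in fact considerably simpler than the paper's proof. The paper does not use the direct concavity-plus-Makai--Martini argument you give; instead it builds auxiliary cones $N_1, N_2$ from the maximal section and the $H_\alpha$-section, splits into two subcases according to which side the apex lies on, and feeds the volume bounds $C_1(\alpha,n)$ and $C_2(\alpha,n)$ from Theorem~4 into a chain of inequalities. Your two-line argument recovers the same sharp constant with no appeal to Theorem~4 at all. One small correction: the equality body is not the cone in (\ref{eq2}) but the opposite one, with vertex in $\xi^-$ and base in $\xi^+$ (so that $A(-a)=0$, $A^{1/(n-1)}$ is affine, and Makai--Martini is tight simultaneously); for the cone (\ref{eq2}) one gets the strictly larger ratio $((n-\alpha)/(n+1))^{n-1}$.

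For $\alpha \in (0,1/n]$ your outline has the right target but real gaps, and the paper's argument is structured differently from what you propose. First, the paper does \emph{not} build a comparison body with the same maximal section as $K$; it reuses the constructions of Theorem~4, which fix the section $K\cap H_\alpha$ and the half-volumes on either side, and then proves \emph{separately} (by a contradiction argument exploiting concavity of $r_{K,\xi}=A_{K,\xi}^{1/(n-1)}$) that the maximal section of the comparison body is at least that of $K$ while its $H_\alpha$-section is at most that of $K$. This is exactly the ``delicate'' step you flag but do not carry out, and it does not follow from the half-space comparison in Theorem~4 alone. Second, the paper needs \emph{two} comparison families, not one: a case split on whether the maximal section of $K$ lies to the right or to the left of $H_\alpha$ leads to a truncated cone $L$ (upper-bound construction of Theorem~4) in the first case and a double cone $M$ (lower-bound construction) in the second. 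The truncated-cone case gives the bound $(n/(n+1))^{n-1}$ via Makai--Martini and turns out not to be binding, but it must still be handled. Your plan to use only double cones leaves the first case uncovered.
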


    \begin{proof}
    Note for $\alpha \in (1/n, n)$, if $K$ is the cone
    \begin{align}\label{cone}
        K = \conv\left(\frac{-n}{n+1}\xi , \frac{1}{n+1}\xi+ \ball{2}{n-1}\right),
    \end{align}
    then it follows that $\vol{K \cap H_\alpha} = 0$. Therefore for such $\alpha$ we cannot do better than $D(\alpha, n) = 0$. 
    
    We will now consider $\alpha \in (-1, 0]$.		
    We can assume that
    \begin{align*}
      \left \lvert K \cap H_\alpha \right \rvert < \sup_{t \in \mathbb{R}}\left \lvert K \cap \big(\xi^\perp + t\xi\big) \right \rvert,
    \end{align*}
    otherwise, the theorem follows immediately.     
    We will apply the Schwarz symmetrization $\calS_\xi$ to $K$. Abusing notation, we will denote the new body again by $K$. We will write $$t_0 = \min\{t \in \mathbb{R} : A_{K, \xi}(t) = \max_{t \in \mathbb{R}}A_{K, \xi}(t)\},$$ so that $K \cap (\xi^\perp + t_0\xi)$ is a section of $K$ orthogonal to $\xi$ of maximal volume. Since $0 < \left \lvert K \cap H_\alpha \right \rvert < \left \lvert K \cap \big(\xi^\perp + t_0\xi\big) \right \rvert$ we can find a cone with base equal to $K \cap \big(\xi^\perp + t_0\xi\big)$ and section equal to $K \cap H_\alpha$. Such a cone is uniquely determined by these two sections. Denote this cone by $N_1$. Let $\gamma \xi$ be the vertex of $N_1$, for some number $\gamma$ (either positive or negative).  Due to the convexity of $K$, $\gamma\xi$ lies outside of $K$ or on the boundary of $K$. Define $N_2$ to be the cone with base equal to $K \cap H_\alpha$ and vertex $\gamma\xi$; see Figure \ref{fig4}. Finally, we will let $H_\alpha^*$ be the halfspace bounded by the hyperplane $H_\alpha$ that contains $N_2$.
    \begin{figure}[h]
    \centering
    \includegraphics[scale=0.4]{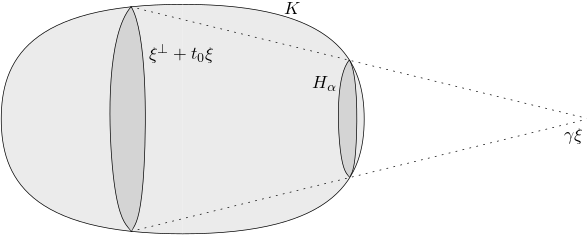}
    \caption{Constructing $N_1$ and $N_2$.}\label{fig4}
    \end{figure}
    We will consider two cases: $H_\alpha^* = H_\alpha^+ = \{x \in \mathbb{R}^n : \langle x, \xi \rangle \geq \alpha h_K(-\xi) \}$ and $H_\alpha^* = H_\alpha^- = \{x \in \mathbb{R}^n : \langle x, \xi \rangle \leq \alpha h_K(-\xi) \} $.  Denote $h = \alpha h_K(-\xi) $. When $H_\alpha^* = H_\alpha^+$ the following inequality holds:
    \begin{align*}
    &\left \lvert K \cap H_\alpha \right \rvert = \frac{\left \lvert N_2 \right \rvert n}{{|\gamma - h|}} \geq \frac{\left \lvert K \cap H^+_\alpha \right \rvert n}{ {|\gamma - h|}}.
    \end{align*}
    \noindent Then by Theorem $4$ and using that $$\vol{K} = \left \lvert K \cap H_\alpha^+ \right \rvert + \left \lvert K \cap H_\alpha^- \right \rvert \geq C_1(\alpha, n)\vol{K} + \vol{K \cap H_\alpha^-}$$ we note that $\big(1 - C_1(\alpha, n)\big)|K| \geq |K \cap H_\alpha^-|$. We arrive at the following estimates
    \begin{align*}
    \vol{K \cap H_\alpha} \geq \frac{\left \lvert K \cap H^+_\alpha \right \rvert n}{{|\gamma - h|}}& \geq C_1(\alpha, n)\frac{\left \lvert K \right \rvert n}{{|\gamma - h|}} \\
    &\geq \frac{C_1(\alpha, n)}{1 - C_1(\alpha, n)}\frac{\left \lvert K \cap H_\alpha^-\right \rvert n}{ {|\gamma - h|}} \geq \frac{C_1(\alpha, n)}{1 - C_1(\alpha, n)}\frac{\left \lvert N_1 \setminus N_2 \right \rvert n}{ {|\gamma - h|}}.
    \end{align*}
    Expressing the volumes of $N_1$ and $N_2$ in terms of their bases, we see
    \begin{align*}
    |K \cap H_\alpha| \geq\ & \frac{C_1(\alpha, n)}{1 - C_1(\alpha, n)}\frac{(\left \lvert N_1 \right \rvert - \left \lvert N_2 \right \rvert) n}{ {|\gamma - h|}}\\
    =\ & \frac{C_1(\alpha, n)}{1 -C_1(\alpha, n)}\frac{\left \lvert K \cap \big(\xi^\perp + t_0\xi\big) \right \rvert  {|\gamma - t_0|}}{ {|\gamma - h|}} - \frac{C_1(\alpha, n)}{1 - C_1(\alpha, n)}\left \lvert K \cap H_\alpha \right \rvert.
    \end{align*}
    And so,
    \begin{multline*}
       |K \cap H_\alpha| \geq  \frac{\frac{C_1(\alpha, n)}{1 - C_1(\alpha, n)}}{1 + \frac{C_1(\alpha, n)}{1 - C_1(\alpha, n)}}\frac{|K \cap (\xi^\perp + t_0 \xi) ||\gamma - t_0|}{|\gamma - h|} \\
       = C_1(\alpha, n)\frac{|K \cap (\xi^\perp + t_0 \xi) ||\gamma - t_0|}{|\gamma - h|}.
    \end{multline*}
    Because $N_1$ is a homothetic copy of $N_2$, we can write $$ \frac{|\gamma - t_0|}{|\gamma - h|}= \frac{\left \lvert K \cap \big(\xi^\perp + t_0\xi\big) \right \rvert^{{1/(n-1)}}}{ \left \lvert K \cap H_\alpha \right \rvert^{{1/(n-1)}}}.$$ 
    Thus,
    \begin{align*}
    \left \lvert K \cap H_\alpha \right \rvert \geq C_1(\alpha, n)\left \lvert K \cap \big(\xi^\perp + t_0\xi\big) \right \rvert \frac{\left \lvert K \cap \big(\xi^\perp + t_0\xi\big) \right \rvert^{{1/(n-1)}}}{\left \lvert K \cap H_\alpha \right \rvert^{{1/(n-1)}}},
    \end{align*}
    which implies
    \begin{align}
    \left \lvert K \cap H_\alpha \right \rvert \geq C_1(\alpha, n)^{\frac{n-1}{n}}\left \lvert K \cap \big(\xi^\perp + t_0\xi\big) \right \rvert. \label{eq12}
    \end{align}
    Now suppose $H_\alpha^* = H_\alpha^-$. Then the following inequality holds
    \begin{align*}
    &\left \lvert K \cap H_\alpha \right \rvert = \frac{\left \lvert N_2 \right \rvert n}{ {|\gamma - h|  }} \geq \frac{\left \lvert K \cap H_\alpha^- \right \rvert n}{ {|\gamma - h|  }}.
    \end{align*}
    By Theorem $4$ we have $\big((1 - C_2(\alpha, n)\big)|K| \leq |K \cap H_\alpha^-|$ and so the following inequalities hold
    \begin{align*}
    \vol{K \cap H_\alpha} \geq \frac{\left \lvert K \cap H_\alpha^- \right \rvert n}{ {|\gamma - h| }} &\geq \big(1 - C_2(\alpha, n)\big)\frac{\left \lvert K \right \rvert n}{ {|\gamma - h| }} \\
    &\geq \frac{1 - C_2(\alpha, n)}{C_2(\alpha, n)}\frac{\left \lvert K \cap H_\alpha^+ \right \rvert n}{ {|\gamma - h| }} \geq \frac{1 - C_2(\alpha, n)}{C_2(\alpha, n)}\frac{\left \lvert N_1 \setminus N_2 \right \rvert n}{ {|\gamma - h|}}.
    \end{align*}
    Expressing the volumes of $N_1$ and $N_2$ in terms of their bases, we get
    \begin{align*}
    |K \cap H_\alpha| \geq\ & \frac{1 - C_2(\alpha, n)}{C_2(\alpha, n)}\frac{(\left \lvert N_1 \right \rvert - \left \lvert N_2 \right \rvert) n}{ {|\gamma - h| }} \\
    =\ & \frac{1 - C_2(\alpha, n)}{C_2(\alpha, n)}\frac{\left \lvert K \cap \big(\xi^\perp + t_0\xi\big) \right \rvert  {|\gamma - t_0| }}{ {|\gamma - h| }} - \frac{1 - C_2(\alpha, n)}{C_2(\alpha, n)}\left \lvert K \cap H_\alpha \right \rvert.
    \end{align*}
    So,
    \begin{multline*}
       |K \cap H_\alpha| \geq \frac{\frac{1 - C_2(\alpha, n)}{C_2(\alpha, n)}}{1 + \frac{1 - C_2(\alpha, n)}{C_2(\alpha, n)}}\frac{|K \cap (\xi^\perp + t_0 \xi) ||\gamma - t_0|}{|\gamma - h|} \\
       = \left(1 - C_2(\alpha, n)\right)\frac{|K \cap (\xi^\perp + t_0 \xi) ||\gamma - t_0|}{|\gamma - h|}.
    \end{multline*}
    Again using the homothety of $N_1$ and $N_2$, we arrive at
    \begin{align*}
    \left \lvert K \cap H_\alpha \right \rvert& \geq \big(1 - C_2(\alpha, n)\big) \left \lvert K \cap \big(\xi^\perp + t_0\xi\big) \right \rvert \frac{\left \lvert K \cap \big(\xi^\perp + t_0\xi\big) \right \rvert^{{1/(n-1)}}}{\left \lvert K \cap H_\alpha \right \rvert^{{1/(n-1)}}},
    \end{align*}
    which implies
    \begin{align}
    \left \lvert K \cap H_\alpha \right \rvert \geq \big(1 - C_2(\alpha, n)\big)^{\frac{n-1}{n}} \left \lvert K \cap \big(\xi^\perp + t_0\xi\big)\right \rvert. \label{eq13}
    \end{align}
    
    Now to determine $D(\alpha, n)$ we need to find the minimum of the two constants in equations $(\ref{eq12})$ and $(\ref{eq13})$ for fixed $\alpha$. Note that $n\alpha \leq -\alpha$ for $\alpha \in (-1, 0]$. Then it follows that
    \begin{align*}
    \big(1 - C_2(\alpha, n)\big)^{\frac{n-1}{n}} = \left(\frac{n(\alpha+1)}{n+1}\right)^{n-1}\leq \left(\frac{n-\alpha}{n+1}\right)^{n-1}= C_1(\alpha, n)^{\frac{n-1}{n}},
    \end{align*}
    for all $\alpha \in (-1, 0]$, and thus we have our desired constant.
    
    We will now consider $\alpha \in (0, 1/n]$.   We claim that it is enough to solve the problem for the class of double cones and truncated cones as it was done   in Theorem $4$. Our plan of attack to prove this claim is to show that when we construct such  bodies following the procedure from Theorem $4$, we can only decrease $|K \cap H_\alpha|$ and only increase the volume of the maximal section of $K$. 
    It suffices to show this for the Schwarz symmetral $\calS_\xi K$ of $K$, which after abuse of notation we will denote by $K$. We will also employ Remark $3$ and prove the result for $\widebar K$ and $\widebar H_\alpha = \{x \in \RR^n : \inner{x}{\xi} = (\alpha + 1)\inner{g(\widebar K)}{\xi}\}$. Again after abuse of notation, we will write $K$ for $\widebar K$ and $H_\alpha$ for $\widebar H_\alpha$. Let $$t_0 = \min \{ t \in \RR: A_{K, \xi}(t) = \max_{t \in \RR} A_{K,\xi}(t) \},$$
    so that $K \cap (\xi^\perp + t_0\xi)$ is a section of $K$ orthogonal to $\xi$ of maximal volume. For brevity,  we will denote  $G_K=(\alpha + 1)\inner{g(K)}{\xi}$. We will split the analysis into two parts, according to whether $G_K < t_0 \le h_K(\xi)$ or $0 \le t_0 < G_K$. The case $t_0 =G_K$ is trivial.
    
    Suppose that $G_K < t_0 \le h_K(\xi)$. Then following the upper bound construction in Theorem $4$, we can construct a convex body $$L = \conv (r_1\ball{2}{n-1}, r_2\ball{2}{n-1}+ \mu \xi),$$ for some $r_1 \geq 0$ and $r_2 \geq 0$ such that $r_1 + r_2 > 0$, and $\mu$ such that $G_K < \mu \le h_K(\xi)$. Here, as before, $\ball{2}{n-1}$ stands for the unit Euclidean ball in $\xi^\perp$. Write
    $$r_{K, \xi}(t) = \omega^{-1/(n-1)}A^{1/(n-1)}_{K, \xi}(t) \quad \text{and} \quad r_{L, \xi}(t) = \omega^{-1/(n-1)} A^{1/(n-1)}_{L, \xi}(t),$$
   where $\omega = |\ball{2}{n-1}|$. Observe that $r_{L, \xi}$ is affine on its support, and    $r_{K, \xi}$ is concave on its support  by Lemma $1$. In fact we can write:
    $$r_{L, \xi}(t) = \frac{r_2 - r_1}{\mu}t + r_1.$$
    
    We claim that $r_2> r_1$, i.e.,  $r_{L, \xi}$ is increasing. To reach a contradiction, assume $r_{L, \xi}$ is  non-increasing. Since the graphs of $r_{L, \xi}$ and $r_{K, \xi}$ intersect at $t=G_K$ and at some other point $t<G_K$, the concavity of $r_{K, \xi}$ implies that $ r_{K, \xi}(t)\le r_{L, \xi}(t)$ for $t\ge G_K$. Since we are assuming that $G_K < t_0 $, we get 
    $r_{K, \xi}(t_0)\le r_{L, \xi}(t_0)\le r_{L, \xi}(G_K)=r_{K, \xi}(G_K)$.
    This means that  $t_0\le G_K$, which is a contradiction. 
    
    Let us denote $G_L= (\alpha + 1)\inner{g(L)}{\xi}$. Since $r_{L, \xi}$ is increasing, $\inner{g(L)}{\xi} \leq \inner{g(K)}{\xi}$ (as in the proof of Theorem $4$), and $A_{L, \xi}(G_K)=A_{K, \xi}(G_K)$, we obtain
    \begin{align*}
        \vol{L \cap \soa{x \in \RR^n : \inner{x}{\xi} = G_L}}& \le \vol{L \cap \soa{x \in \RR^n : \inner{x}{\xi} = G_K}} \\
       & = \vol{K \cap \soa{x \in \RR^n : \inner{x}{\xi} = G_K}}.
    \end{align*}
    We now want to show that the volume of the maximal section of $L$ is no smaller than the volume of the maximal section of $K$. Suppose the opposite, that is
    \begin{align}
        A_{K, \xi}(t_0) > A_{L, \xi}(\mu).\label{ineq}
    \end{align}
    Then it follows by the construction of $L$ and concavity of $r_{K, \xi}$ on its support that $\mu < t_0$. (Otherwise we would have $A_{K, \xi}(t_0) \le A_{L, \xi}(t_0) \le A_{L, \xi}(\mu))$. Raising both sides of \eqref{ineq} to the power $1/(n-1)$, we see \begin{align*}
        r_{K, \xi}(t_0) > r_{L, \xi}(\mu) = r_2.
    \end{align*}

   Observe that the linear functions $\frac{r_2 - r_1}{\mu}t + r_1$ and $\frac{r_2 - r_{K, \xi}(G_K)}{\mu - G_K}(t - G_K) + r_{K, \xi}(G_K)$ coincide. Indeed, at $t=\mu$ they are both equal to $r_2$, and at $t=G_K$ they are equal to $\frac{r_2 - r_1}{\mu}G_K + r_1= r_{L, \xi}(G_K)=r_{K, \xi}(G_K)$. Thus we obtain   
    \begin{align*}
        \vol{L \cap \{x \in \RR^n : \inner{x}{\xi} \geq G_K\}}& = \omega  \int_{G_K}^{\mu}\left(\frac{r_2 - r_1}{\mu}t + r_1\right)^{n-1} dt \\
        &=\omega  \int_{G_K}^{\mu}\left(\frac{r_2 - r_{K, \xi}(G_K)}{\mu - G_K}(t - G_K) + r_{K, \xi}(G_K)\right)^{n-1} dt\\
        & =   \frac{\omega}{n}\frac{\mu - G_K}{r_2 - r_{K, \xi}(G_K)} (r^n_2 - r^n_{K, \xi}(G_K)) \\
        &<   \frac{\omega}{n}\frac{t_0 - G_K}{r_2 - r_{K, \xi}(G_K)} (r^n_2 - r^n_{K, \xi}(G_K)) \\
        & = \omega  \int_{G_K}^{t_0}\left(\frac{r_2 - r_{K, \xi}(G_K)}{t_0 - G_K}(t - G_K) + r_{K, \xi}(G_K)\right)^{n-1} dt,
    \end{align*}
    where we used $\mu < t_0$ for the above inequality. Denote $$\zeta(t) =  \frac{r_2 - r_{K, \xi}(G_K)}{t_0 - G_K}(t - G_K) + r_{K, \xi}(G_K) .$$
    Note that $\zeta(G_K) = r_{K, \xi}(G_K)$ and $\zeta(t_0) =r_2$. Since by assumption $r_2 < r_{K, \xi}(t_0)$, it follows from concavity that $\zeta(t) < r_{K, \xi}(t)$ for all $t \in
    (G_K, t_0]$, and thus we have
    \begin{align*}
      \omega \int_{G_K}^{t_0}\zeta^{n-1}(t)\, dt < \int_{G_K}^{t_0}A_{K, \xi}(t)\, dt \le \vol{K \cap \{x \in \RR^n : \inner{x}{\xi} \geq G_K\}}. 
    \end{align*}
    Combining all of the above inequalities, we obtain
    \begin{align*}
        \vol{L \cap \soa{x \in \RR^n : \inner{x}{\xi} \geq G_K}} < \vol{K \cap \soa{x \in \RR^n : \inner{x}{\xi} \geq G_K}},
   \end{align*}
    reaching a contradiction to our construction in Theorem $4$. Therefore, we must have 
    \begin{align*}
        A_{K, \xi}\big(t_0) \leq A_{L, \xi}\big(\mu),
    \end{align*}
    as desired.

    From now on we can work with the body $L$ instead of $K$. We are interested in minimizing $\frac{A_{L, \xi}(G_L)}{A_{L,\xi}(\mu)}$.   
    As in Theorem $4$, computing this minimum explicitly is not feasible, but we will work around this fact. For now, it is enough to note that $A_{L, \xi}(t)$ is increasing in $t$ on its support, so it follows that $A_{L, \xi}(\inner{g(L)}{\xi}) \leq A_{L, \xi}(G_L)$. Hence we obtain for $\alpha \in (0, 1/n]$ the following inequalities
    \begin{align}
    \left(\frac{n}{n+1}\right)^{n-1} \leq \frac{A_{L, \xi}(\inner{g(L)}{\xi})}{A_{L, \xi}(\mu)} \le \frac{A_{L, \xi}(G_L)}{A_{L, \xi}(\mu)}  , \label{eq100}
    \end{align}
    where we used the result of Makai and Martini \eqref{eq4}.
    
    Now suppose that $0 \le t_0 < G_K$. Then following the lower bound construction in Theorem $4$, we can construct a convex body $M = \conv(0, r\ball{2}{n-1} + \beta\xi, v\xi)$ for some $v \geq h_K(\xi)$, $r > 0$, and $\beta$ such that $0 \le \beta \le G_K$. Note that $M \cap (\xi^\perp + \beta\xi)$ is the maximal section of $M$ in the direction $\xi$.     
    To be precise, in Theorem 4 the  construction that produced a double cone was performed for $\alpha \in (0, 1/n)$. However the same construction works also for $\alpha=1/n$, unless $H_{1/n}$ is a supporting hyperplane to $K$. In the latter case we can see that $K$ is a body that can be reduced to the cone   \eqref{cone} and therefore is not an extremizer. Thus below we will exclude such bodies and work with all $\alpha \in (0, 1/n]$. In particular, we will have $\beta < v$.

   As was done above above, we may write: $$r_{M, \xi}(t) = \omega^{-1/(n-1)} A_{M, \xi}^{1/(n-1)}(t) = \begin{cases} 
            \frac{r}{\beta}t, & \mbox{ if } t \in [0, \beta], \\
            \frac{r}{\beta - v}(t - \beta) + r,   & \mbox{ if } t \in (\beta, v].
           \end{cases}$$
    Recall from Theorem $4$ that  $\inner{g(M)}{\xi} \geq \inner{g(K)}{\xi}$. Denoting $G_M=(\alpha + 1)\inner{g(M)}{\xi}$, we see that $\beta\le G_K\le G_M$, and hence since $A_{M, \xi}$ is decreasing on $[\beta,v]$, it follows that 
    \begin{align*}
        \vol{M \cap \soa{x \in \RR^n : \inner{x}{\xi} = G_M}} & \le  \vol{M \cap \soa{x \in \RR^n : \inner{x}{\xi} = G_K}} \\
        &= \vol{K \cap \soa{x \in \RR^n : \inner{x}{\xi} = G_K}}.
    \end{align*}
    Now we want to show that the volume of the maximal section of $M$ is no smaller than the volume of the maximal section of $K$. Again, suppose the opposite, that is
   \begin{equation}A_{K, \xi}(t_0) > A_{M, \xi}(\beta).\label{ineq1}
   \end{equation}
    Then it follows by the construction of $M$ and concavity of $r_{M, \xi}$ on its support that $\beta > t_0$. (Otherwise, using that $ A_{K, \xi}(t) \le  A_{M, \xi}(t)$ for $t\in[\beta,G_K]$, $A_{M, \xi}(t)$ is decreasing for $t\ge \beta$, and $\beta\le t_0 \le G_K$, we would get $ A_{K, \xi}(t_0) \le  A_{M, \xi}(t_0)\le A_{M, \xi}(\beta) $). Raising both sides of \eqref{ineq1} to the power $1/(n-1)$, we again obtain
    $$r_{K, \xi}(t_0) > r_{M, \xi}(\beta) = r.$$
    
    Since $r_{M, \xi}(G_K) = r_{K, \xi}(G_K)$, we have $\frac{r}{\beta - v} = \frac{r - r_{K, \xi}(G_K)}{\beta - G_K}$, and therefore
    \begin{align*}
      \frac{1}{\omega}  |M \cap &\{x \in \RR^n : \inner{x}{\xi} \leq G_K\}|\\
        & = \int_{0}^{\beta}\left(\frac{r}{\beta}\right)^{n-1} t^{n-1} dt  + \int_{\beta}^{G_K}\left(\frac{r}{\beta - v}(t - \beta) + r\right)^{n-1} dt \\
        &= \int_{0}^{\beta}\left(\frac{r}{\beta}\right)^{n-1} t^{n-1} dt +\int_{\beta}^{G_K}\left(\frac{r - r_{K, \xi}(G_K)}{\beta - G_K}(t - \beta) + r\right)^{n-1} dt\\
       &= \frac{1}{n}\left( \beta r^{n-1}  + \frac{ \beta - G_K}{r- r_{K, \xi}(G_K) } \left(r^n_{K, \xi}(G_K) - r^n\right) \right).
          \end{align*}
     Observe that the latter linear function of $\beta$ is decreasing, since its slope is negative:
          $$r^{n-1}+ \frac{r^n_{K, \xi}(G_K) - r^n}{r- r_{K, \xi}(G_K) }= \frac{r_{K, \xi}(G_K)  \left(r^{n-1}_{K, \xi}(G_K)  -r^{n-1}\right)}{r- r_{K, \xi}(G_K) }<0.$$
          Therefore, using    $\beta > t_0$, we obtain          
          \begin{align*}
            \frac{1}{\omega}  |M \cap &\{x \in \RR^n : \inner{x}{\xi} \leq G_K\}|\\
       &< \frac{1}{n}\left( t_0 r^{n-1}  + \frac{ t_0 - G_K}{r- r_{K, \xi}(G_K) } \left(r^n_{K, \xi}(G_K) - r^n\right) \right) \\
       &= \int_{0}^{t_0}\left(\frac{r}{t_0}\right)^{n-1} t^{n-1} dt +\int_{t_0}^{G_K}\left(\frac{r - r_{K, \xi}(G_K)}{t_0 - G_K}(t - t_0) + r\right)^{n-1} dt, 
    \end{align*}
   Let us write 
    $$\zeta(t) = \begin{cases} 
            \frac{r}{t_0}t, & \mbox{ if } t \in [0, t_0], \\
            \frac{r - r_{K, \xi}(G_K)}{t_0 - G_K}(t - t_0) + r, & \mbox{ if }t \in (t_0, G_K].
           \end{cases}$$
    Note that $\zeta(G_K) =  r_{K, \xi}(G_K)$. Since by assumption $r < r_{K, \xi}(t_0)$, it follows from concavity that $\zeta(t) < r_{K, \xi}(t)$ for all $t \in [0, G_K)$, and thus we have
    \begin{align*}
      \omega \int_{0}^{G_K}\zeta^{n-1}(t)\, dt < \int_{0}^{G_K}A_{K, \xi}(t)\, dt = \vol{K \cap \{x \in \RR^n : \inner{x}{\xi} \leq G_K\}}. 
    \end{align*}
    Combining all of the above inequalities, we obtain
    \begin{align*}
        \vol{M \cap \soa{x \in \RR^n : \inner{x}{\xi} \leq G_K}} < \vol{K \cap \soa{x \in \RR^n : \inner{x}{\xi} \leq G_K}},
   \end{align*}
    reaching a contradiction to our construction in Theorem $4$. Therefore, we must have 
    \begin{align*}
        A_{K, \xi}\big(t_0) \leq A_{M, \xi}\big(\beta),
    \end{align*}
    and hence, it suffices to work with $M$ instead of $K$. After rescaling, we may assume that $h_{M}(\xi) = 1$ and $\vol{r \ball{2}{n-1}} = n.$ As before, we will define
    \begin{align*}
    M_2 = M \cap \{x \in \mathbb{R}^n \mid \langle x,\xi  \rangle \geq \beta\}.
\end{align*}
Taking our computations from Theorem $4$, we have $\vol{M_2} = 1 - \beta$ and
    \begin{align*}
    \left\lvert M \cap H_\alpha^+ \right\rvert = \frac{(1-G_M)^n}{(1-\beta)^{n-1}},
    \end{align*}
     where $$G_M   = (\alpha + 1)\frac{\beta(n-1) + 1}{n+1}.$$
    Expressing the volumes of the sections we are interested in terms of the volumes of the cones $M_2$ and $M \cap H_\alpha^+$, we can write
    \begin{equation*}
    \frac{\vol{M \cap \{x \in \RR^n : \inner{x}{\xi} = G_M\}}}{\bvol{M \cap (\xi^\perp + \beta\xi)}} = \frac{|M \cap H_\alpha^+|}{|M_2|}\frac{1-\beta}{1-G_M}  
    = \left(\frac{1 - G_M}{1 - \beta}\right)^{n-1}.
    \end{equation*}
    Denote by $\psi$ the above function of $\beta$. Our goal is to find the minimum of $\psi$ when $0\le \beta\le G_M$, or equivalently when  $\beta \in [0,\frac{\alpha + 1}{2-(n-1)\alpha } ]$ (as taken from Theorem $4$). One can check that $\psi$ is increasing in $\beta$ and therefore the minimum of $\psi$ is attained at $\beta=0$,
    \begin{align}
    \psi(\beta)\ge \psi(0)= \left(\frac{n-\alpha}{n+1}\right)^{n-1}. \label{eq110}
    \end{align}
  Additionally, since $\beta=0$ the extremizing body is a cone.

    Now to determine the value of $D(\alpha, n)$ for $\alpha \in (0, 1/n]$, we need to find the lower of the two constants in $(\ref{eq100})$ and $(\ref{eq110})$. It is enough to note for $\alpha \in (0, 1/n]$ that $$\left(\frac{n-\alpha}{n+1}\right)^{n-1} \leq \left(\frac{n}{n+1}\right)^{n-1},$$
    and the result follows.
    
    Discussing equality cases, we see for $\alpha \in (-1, 0]$ that equality follows from the equality cases for the upper bound in Theorem $3$ (which comes from Gr\"unbaum's original theorem), and thus the equality bodies are, up to translation, cones of the form $  \conv(B, v)$ with $B$ an $(n-1)$-dimensional convex body lying parallel to $\xi^\perp$ in $\xi^+$ and vertex $v$ lying in $\xi^-$. For $\alpha \in (0, 1/n]$, the equality bodies are cones of the form $\conv(B, v)$ with $B$ an $(n - 1)$-dimensional convex body lying parallel to $\xi^\perp$ in $\xi^-$ and vertex $v$ lying in $\xi^+$; see Figure \ref{fig5}. For $\alpha \in (1/n, n)$, there are many bodies for which $K\cap H_\alpha = 0$. Therefore we omit this case from Figure \ref{fig5}. 
    \begin{figure}[h]
    \centering
    \includegraphics[scale=0.23]{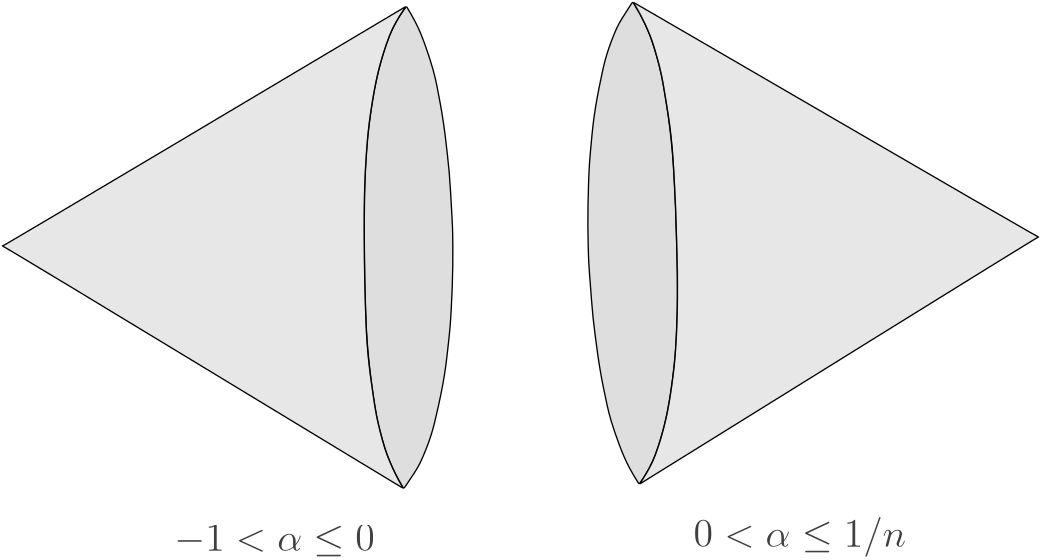}
    \caption{Extremizing shapes for Theorem $5$. }\label{fig5}
    \end{figure}
    \end{proof}

\end{document}